\documentclass[12pt,reqno,a4paper]{amsart}
\usepackage{
    amsmath,  amsfonts, amssymb,  amsthm,   amscd,
    gensymb,  graphicx, comment,  etoolbox, url,
    booktabs, stackrel, mathtools,enumitem, mathdots,  microtype, lmodern,    mathrsfs, graphicx, tikz,  longtable,tabularx, float, tikz, pst-node, tikz-cd, multirow, tabularx, amscd,  bm, array, makecell, diagbox, booktabs,ragged2e, caption, subcaption }
\usepackage{makecell,slashbox}
\usepackage{algorithm}
\usepackage{algpseudocode}
\usepackage{esvect}

\MakeRobust{\vv}
\usepackage{xcolor}
\usepackage[utf8]{inputenc}
\usepackage{fullpage, wrapfig,textcomp, csquotes,fbb}
\usepackage[pagebackref=true, colorlinks=true, linkcolor=blue, citecolor=blue, urlcolor=blue, breaklinks=true]{hyperref}
\usepackage[capitalise]{cleveref}
\usepackage{todonotes}
\usetikzlibrary{positioning}
\usetikzlibrary{shapes,arrows.meta,calc}
 
\usetikzlibrary{arrows}
\definecolor{omegaOne}{RGB}{180, 215, 255}    
\definecolor{omegaTwo}{RGB}{255, 215, 180}    
\definecolor{omegaThree}{RGB}{180, 235, 180}

\newtheorem{theorem}{Theorem}[section]

\newtheorem{corollary}[theorem] {Corollary}
\newtheorem{definition}[theorem]{Definition}
\newtheorem{example}[theorem]{Example}
\newtheorem{lemma}[theorem]{Lemma}

\newtheorem{proposition}[theorem]{Proposition}
\newtheorem{remark}[theorem]{Remark}

\crefalias{corollary}{corollary}
\crefalias{theorem}{theorem}
\crefalias{lemma}{lemma}
\crefalias{proposition}{proposition}
\crefalias{definition}{definition}
\newcommand{\scat}{\mathrm{scat}}

\newcommand{\dcscott}{\mathrm{DC}^{Sc}}
\newcommand{\dcMoriloWu}{\mathrm{DC}^{MW}}
\newcommand{\dcscottspace}{\mathrm{DC}}

\newcommand{\TC}{\mathrm{TC}}
\newcommand{\Tcscott}{\mathrm{TC^{Sc}}}

\newcommand{\ssec}{\mathrm{s}\text{-}\mathrm{secat}}

\newcolumntype{x}[1]{>{\centering\arraybackslash}p{#1}}

\begin{document}
\title{Discrete version of topological complexity of maps}

\author[S. Datta ]{Sutirtha Datta}
\address{Department of Mathematics, Indian Institute of Science Education and Research Pune, India}
\email{sutirtha2702@gmail.com}
\author[N. Daundkar ]{Navnath Daundkar}
\address{Department of Mathematics, Indian Institute of Technology Madras, India}
\email{navnath@iitm.ac.in}
\author[A. Sarkar ]{Abhishek Sarkar}
\address{School of Computing, MIT Art, Design and Technology University, Loni Kalbhor, Pune, India}
\email{abhishek.sarkar@mituniversity.edu.in}
\author[S. Sarkar]{Soumen Sarkar}
\address{Department of Mathematics, Indian Institute of Technology Madras, Chennai-600036, India}
\email{soumen@iitm.ac.in}
 
\begin{abstract}
We introduce and study discrete analogs of Scott's and Murillo-Wu's topological complexity of  maps. We prove that these discrete analogs are  contiguity invariants and are, in fact, equivalent. Furthermore, we establish the fundamental theoretical properties and computational aspects of the discrete topological complexity of simplicial maps.

\end{abstract}
\keywords{ Topological complexity of maps,  Discrete topological complexity, Simplicial Lusternik-Schnirelmann category, Contiguity}
\subjclass[2020]{55M30, 55S40, 55R80}
\maketitle
\section{Introduction}
Topological complexity, $\mathrm{TC}(X)$, was introduced by Farber \cite{FarberTC} to quantify the navigational complexity of a robot's configuration space $X$. Defined as the sectional category (or Schwarz genus) of the path space fibration $\pi_X : PX \to X \times X$, it serves as a homotopy invariant classically estimated via Lusternik-Schnirelmann (LS) category and cohomological cup-length (see \cite{Farberbook}). To facilitate combinatorial analysis and algorithmic implementations, Fernández-Ternero et al. \cite{fernandez2018discrete} established a discrete analog of topological complexity in the setting of abstract simplicial complexes. There is another notion of simplicial topological complexity using barycentric subdivisions due to Jes\'us Gonz\'alez, see \cite{gonzalez2017simplicial}. However, we will restrict ourselves to the discrete topological complexity setting. For higher analogs of discrete topological complexity, see \cite{alabay2024higher}.

For broader motion planning scenarios where tasks are constrained by a target workspace, several  topological complexity variants of map-level have been proposed, most notably by Pave\v{s}i\'c \cite{pavevsic2018topological}, Murillo--Wu \cite{murillo2021topological}, and Scott \cite{scott2022topological}. For a work map $f : X \to Y$ mapping configuration states to workspace goals, $\mathrm{TC}(f)$ measures the joint complexity of motion planning relative to the task. While Pave\v{s}i\'c's formulation is not homotopy invariant in general, Scott and Murillo--Wu introduced homotopy-invariant definitions that were shown to be equivalent in the continuous setting.

In this paper, we extend the Scott and Murillo--Wu frameworks to the combinatorial setting by introducing the discrete topological complexity of simplicial maps. We adapt Scott's model to the setting of abstract simplicial complexes by introducing $f$-Farber subcomplexes, defining the discrete invariant $\mathrm{DC}^{\mathrm{Sc}}(f)$ and \Cref{prop: contiguity invariance of dcscottmap} proves that it is invariant under contiguous simplicial maps. Finally, in \Cref{prop: Sc=MW} we establish the equivalence between our Scott-type discrete topological complexity and the discrete analog of the Murillo--Wu formulation. This work complements recent results of {\.I}s and Karaca \cite{is2024discrete}, who studied a discrete analog of Pave\v{s}i\'c's fibration-based complexity.

The layout of the paper is as follows: In \Cref{section 2 : Background}, notions of simplicial LS category, simplicial sectional category and discrete topological complexity and the topological complexity of maps are recalled. \Cref{section 3: scott} is devoted to defining the discrete version of Scott's topological complexity and establishing its contiguity invariance property. In \Cref{section 4 M-W} we define the discrete version of Murillo-Wu's topological complexity of maps and prove the equivalence of the two notions in \Cref{prop: Sc=MW}.

\section{Background: contiguity invariants and topological complexity of maps}\label{section 2 : Background}
In this section, we recall several key notions, including simplicial LS category of abstract simplicial complexes and simplicial maps, simplicial sectional category, and discrete topological complexity. All simplicial complexes are assumed to be finite and (edge-path) connected, and all maps between simplicial
complexes are assumed to be simplicial maps. The simplicial map  $K \to L$ that sends all of $K$ to some vertex in $L$ will be denoted by $*.$

We first revisit the notion of contiguity, considered as the discrete version of homotopy in the case of simplicial complexes and simplicial maps. Given two simplicial complexes $K$ and $L$, a pair of simplicial maps $\phi,\psi: K \to L$ are called \textit{contiguous} if, for any simplex $\sigma \in K$, the set $\phi(\sigma) \cup \psi(\sigma)$ is a simplex of $L$ and will be denoted by $\phi \sim_c \psi.$ A pair of simplicial maps $\phi, \psi: K \to L$ are said to be in the \textit{same contiguity class} if there is a finite sequence $$\phi:=\phi_0 \sim_c \phi_1\sim_c \dots \sim_c \phi_n := \psi$$ of simplicial maps $\phi_i : K \to L$ for $0 \leq i \leq n$.  
\subsection{Simplicial LS category} 
The notion of strong homotopy type, strong collapse, and expansion was introduced and studied by Barmak and Minian in \cite{barmak2009algebraic, barmak2012strong}, which was later used by Fernández-Ternero et al. in \cite{fernandez2015lusternik} to define the corresponding notion of LS-category for simplicial complexes. By \cite[Corollary 2.12]{barmak2012strong}, two simplicial complexes $K$ and $L$ have the same strong homotopy
type if and only if there are simplicial maps $\phi: K \to L$ and $\psi: L \to K$ such that
$\psi \circ \phi \sim id_K$ and $\phi \circ \psi \sim id_L.$
\begin{definition}
    Let $K$ be an abstract simplicial complex. The (normalized) simplicial LS-category of $K$, denoted by $\operatorname{scat}(K)$, is the least integer $m \geq 0$ such that there exist subcomplexes 
$$L_0, L_1, \dots, L_m \subset K \quad\text{such that} \quad K = \bigcup_{i=0}^{m} L_i,$$
with each $L_i$ being categorical. That is, for every $i$, the inclusion map
\[
\iota_{L_i} : L_i \hookrightarrow K
\]
lies in the contiguity class of a constant map $L_i \to K$, i.e.,  $\iota_{L_i} \sim *$.
\end{definition}

The notion of simplicial LS-category of simplicial maps was introduced by Scoville and Swei in \cite[Definition 7]{S-S}.
\begin{definition}
    Let $f:K \rightarrow L$ be a simplicial map between two abstract simplicial complexes $K$ and $L$. A subcomplex $J \subset K$ is said to be $f$-categorical if the map $f|_J:J \rightarrow L$ belongs to the contiguity class of some constant map $J \rightarrow L$, i.e., $f|_J \sim *$. The minimum number $m \geq 0$ such that there are $f$-categorical subcomplexes $J_0, \dots, J_m$ which cover $K$ is called the simplicial \emph{LS}-category of the map $f$, denoted by $\scat(f)$.
    \end{definition}
    \begin{remark}
     For a simplicial complex $K$, $\scat(Id_K)=\scat(K)$. 
    \end{remark} 
\subsection{Simplicial sectional category}
In the classical setting the LS-category can be viewed as the sectional category of a path fibration. In the simplicial setting the notion of sectional category was introduced by Fernández-Ternero et al. in \cite[Definition 18]{fernandez2021simplicial}.
\begin{definition}
    Let \( f : K \to L \) be a simplicial map. The simplicial sectional category (or \v{S}varc genus) of \( f \), denoted by $\ssec(f)$ is the smallest integer \( n \geq 0 \) such that \( L \) can be expressed as a union
\[
L = L_0 \cup \cdots \cup L_n
\]
of subcomplexes, and for each \( j \) there exists a section \( \sigma_j \) of \( f \), that is, a simplicial map
$$\sigma_j : L_j \to K~~\text{satisfying}~~f \circ \sigma_j = \iota_j : L_j \hookrightarrow L.$$
If no such $n \geq 0$ exists, we define $\ssec(f)= \infty$.
\end{definition}
 
\subsection{Discrete topological complexity}\label{discrete TC} First, we recall the definition of  Farber subcomplexes and discrete topological complexity defined in \cite{fernandez2018discrete}.

\begin{definition}\label[definition]{defn: farber subcomplex}
Let $K$ be a simplicial complex and let $K^2 = K \Pi K$ denote its categorical product. A simplicial subcomplex $\Omega \subseteq K^2$ is called a \emph{Farber subcomplex} if there exists a simplicial map
\[
\sigma : \Omega \to K ~~\text{such that}~~ \Delta_K \circ \sigma \sim \iota_{\Omega},
\]
where $\Delta_K : K \to K^2$ is the diagonal map given by $\Delta_K(v) = (v,v)$, and $\iota_{\Omega} : \Omega \hookrightarrow K^2$ is the inclusion. 
\end{definition}

\begin{definition}
The \emph{discrete topological complexity} of $K$, denoted by $\mathrm{DC}(K)$, is the least integer $n \ge 0$ such that there exist Farber subcomplexes $\Omega_0, \ldots, \Omega_n \subseteq K^2$ satisfying
\[
K^2 = \bigcup_{j=0}^{n} \Omega_j.
\]
If no such $n$ exists, we define $\mathrm{DC}(K)= \infty$.
\end{definition}

\begin{remark} In \cite[Theorem 3.3]{fernandez2018discrete} it was shown that the discrete topological complexity is an invariant of the strong homotopy type.
The invariant $\mathrm{DC}(K)$ is also referred to as the \emph{simplicial complexity} of $K$ by the authors of \cite{fernandez2018discrete}.
\end{remark}

 The reader is referred to~\cite{fernandez2018discrete} for further results, including comparisons between simplicial LS-category and discrete topological complexity (TC), as well as relationships between discrete TC and the TC of geometric realizations.

The discrete TC of wedges of circles is computed in \cite[Theorem 5.6]{fernandez2018discrete}. In particular, if $|K|=S^1$, then $\mathrm{DC}(K)=2>\TC(S^1)=1$, and if $|K|=\vee_n S^1$ for $n\geq 2$, then $\mathrm{DC}(K)=2=\TC(|K|)$.
From a computational perspective, only a limited number of examples are known beyond wedges of circles.

We establish a multiplicative product inequality for the discrete topological complexity of two simplicial complexes which gives an upper bound on the $\dcscottspace(K \Pi L)$.
\begin{proposition}\label[Proposition]{prop:multiplicative product inequality for DC}
    For simplicial complexes $K$ and $L$ we have $$\dcscottspace(K \Pi L) \leq (\dcscottspace(K)+1)(\dcscottspace(L)+1)-1.$$
\end{proposition}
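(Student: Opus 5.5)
The plan is to take products of Farber subcomplexes, transported along the canonical isomorphism $(K\Pi L)^2 \cong K^2 \Pi L^2$ of categorical products. Let $m=\dcscottspace(K)$ and $n=\dcscottspace(L)$, with Farber covers $K^2=\bigcup_{i=0}^m \Omega_i$ and $L^2=\bigcup_{j=0}^n \Theta_j$, and sections $\sigma_i:\Omega_i\to K$, $\tau_j:\Theta_j\to L$ satisfying $\Delta_K\circ\sigma_i\sim\iota_{\Omega_i}$ and $\Delta_L\circ\tau_j\sim\iota_{\Theta_j}$.

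The first step is to identify $(K\Pi L)^2$ with $K^2\Pi L^2$ via the shuffle map $((k,l),(k',l'))\mapsto((k,k'),(l,l'))$. A simplex of the categorical product is exactly a set whose two coordinate projections are simplices, so this bijection on vertices is a simplicial isomorphism. Under it, $\Delta_{K\Pi L}$ corresponds to $\Delta_K\Pi\Delta_L$.

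Next, set $\Gamma_{ij}=\Omega_i\Pi\Theta_j\subseteq K^2\Pi L^2$, a subcomplex. These cover: a simplex $\rho$ of $K^2\Pi L^2$ has projections $p_1(\rho)\in\Omega_i$ for some $i$ and $p_2(\rho)\in\Theta_j$ for some $j$, and $\rho\subseteq p_1(\rho)\times p_2(\rho)$, so $\rho\in\Gamma_{ij}$. This yields $(m+1)(n+1)$ subcomplexes. Define the section $\sigma_i\Pi\tau_j:\Gamma_{ij}\to K\Pi L$ by $(a,b)\mapsto(\sigma_i(a),\tau_j(b))$. It is simplicial because its projections $\sigma_i\circ p_1$ and $\tau_j\circ p_2$ are simplicial, which is the universal property of the categorical product.

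The main point, though routine, is that contiguity is preserved under products. If $\phi\sim_c\phi'$ into $K^2$ and $\psi\sim_c\psi'$ into $L^2$, then for a simplex $\rho$ of $\Gamma_{ij}$ the set $(\phi\Pi\psi)(\rho)\cup(\phi'\Pi\psi')(\rho)$ has first projection $\phi(p_1\rho)\cup\phi'(p_1\rho)$, which is a simplex, and similarly for the second projection; hence it is a simplex of the product. Chaining these steps (and padding the shorter contiguity sequence with identity steps) shows that
\[
(\Delta_K\Pi\Delta_L)\circ(\sigma_i\Pi\tau_j)=(\Delta_K\sigma_i)\Pi(\Delta_L\tau_j)\sim\iota_{\Omega_i}\Pi\iota_{\Theta_j}=\iota_{\Gamma_{ij}}.
\]
Here $\phi$ and $\phi'$ are applied after restricting along $p_1|_{\Gamma_{ij}}$, which preserves contiguity. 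Transporting back through the isomorphism, each $\Gamma_{ij}$ is a Farber subcomplex of $(K\Pi L)^2$. Therefore $\dcscottspace(K\Pi L)\le (m+1)(n+1)-1$.
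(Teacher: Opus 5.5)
Your proof is correct and follows the same route as the paper. Both arguments take the products $\Omega_i\Pi\Theta_j$ of Farber subcomplexes with sections $\sigma_i\Pi\tau_j$, transported along $(K\Pi L)^2\cong K^2\Pi L^2$, which gives $(m+1)(n+1)$ Farber subcomplexes. You also justify two steps the paper only asserts: that the $\Omega_i\Pi\Theta_j$ actually cover, via $\rho\subseteq p_1(\rho)\times p_2(\rho)$, and that contiguity classes are preserved under $\Pi$, via the padded contiguity chains.
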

\begin{proof}
    Suppose that $\dcscottspace(K)=m$ and $\dcscottspace(L)=n$, then we have the following two diagrams where $\Omega \in \{\Omega_i\}_{i=0}^{i=m} \subset K \Pi K$ and $\Omega' \in \{\Omega'_j\}_{j=0}^{j=n} \subset L \Pi L$ are Farber Subcomplexes. 
    We have the following two diagrams.
    \[\begin{tikzcd}
 & K  \arrow[d, "\Delta_K" ] \\%
 \Omega \arrow[r, hook, "i_{\Omega}"] \arrow[ur, "\sigma"] & K \Pi K,
\end{tikzcd} \quad \begin{tikzcd}
 & L  \arrow[d, "\Delta_L" ] \\%
 \Omega' \arrow[r, hook, "i_{\Omega'}"] \arrow[ur, "\sigma'"] & L \Pi L. 
\end{tikzcd}\]
    We claim that the categorical product $\Omega \Pi \Omega'$ satisfies the condition for being a Farber subcomplex of $ \bigcup\limits_{\substack{0 \leq i \leq m \\ 0 \leq j \leq n}} \Omega_i \Pi \Omega'_j=\left(K \Pi L\right)\Pi\left(K \Pi L\right) \cong K^2 \Pi L^2$, so that we have the following:
    \[\begin{tikzcd}
 & K\Pi L  \arrow[d, "\Delta_{K \Pi L}" ] \\%
 \Omega \Pi \Omega' \arrow[r, hook, "i_{\Omega \Pi \Omega'}"] \arrow[ur, "\sigma \Pi \sigma'"] & K^2 \Pi L^2. 
\end{tikzcd}\]
Indeed $i_{\Omega \Pi \Omega'} = (i_{\Omega}, i_{\Omega'})$, $\Delta_{K \Pi L} = (\Delta_K, \Delta_L)$ and $\sigma \Pi \sigma' = (\sigma, \sigma')$. Moreover, by our assumption we have $\Delta_K \circ \sigma \sim i_{\Omega}$ and $\Delta_L \circ \sigma' \sim i_{\Omega'}$. Hence $\Delta_{K \Pi L} \circ (\sigma \Pi \sigma') \sim i_{\Omega \Pi \Omega'}.$ Thus, we obtain $(m+1)(n+1)$ Farber subcomplexes formed by $\{\Omega_i \Pi \Omega'_j ~|~ 0 \leq i \leq m, 0 \leq j \leq n\}$ of $K^2 \Pi L^2$, which covers $K^2 \Pi L^2$. Thus, by definition, we obtain the required result.
\end{proof}

\begin{example}
As a consequence of Proposition \ref{prop:multiplicative product inequality for DC}, we obtain $\dcscottspace(L)\leq 8$ if the geometric realization $|L|\simeq S^1\times S^1$. Note that $\TC(|L|)=2$. Suppose that $|K|\simeq (S^1\vee S^1)\times (S^1\vee S^1)$. Recall that $\TC(|K|)=4$ and $\dcscottspace(K')=2$ if $|K'|\simeq S^1\vee S^1$. By Proposition \ref{prop:multiplicative product inequality for DC}, we have $\dcscottspace(K)\leq 8$. Therefore, using \cite[Theorem 5.2]{fernandez2018discrete} $4\leq \dcscottspace(K)\leq 8$.
\end{example}

\subsection{Topological Complexity of Maps}
The idea of topological complexity of maps in \cite{scott2022topological} was to incorporate $f$-motion planners, where $f:X \to Y$ is a continuous map. An $f$-motion planner on a subset $U \subset X \times X$ is a map $f_U: U \to PY$ such that 
$$f_U(x_0,x_1)(0)=f(x_0)~~~\text{and}~~~f_U(x_0,x_1)(1)=f(x_1).$$
\begin{definition}[{\cite[Definition 3.1]{scott2022topological}}]
     The topological complexity of $f$, denoted $\TC(f)$, is the least $k$ such that $X \times X$ can be covered by $k+1$ open subsets $U_0,\dots,U_k$ on which there are $f$-motion planners. If no such $k$ exists, we define $\TC(f)=\infty.$   
\end{definition}
There is another version of topological complexity of maps by Murillo and Wu.
\begin{definition}[{\cite[Definition 0.1]{murillo2021topological}}]
    Given a continuous map $f: X \to Y$ , the topological complexity of $f$ , $\TC(f)$, is the least integer $n$ such that $X \times X$ can be covered by $n+1$ open sets $\{U_i\}_{i=0}^n$ on each of which there is a continuous map $\sigma_i : U_i \to PX$ satisfying $$(f \times f ) \circ \pi \circ \sigma_i \simeq (f \times f)|_{U_i},$$ where $\pi: PX \to X \times X$ is the path ﬁbration, $\pi(\alpha)=(\alpha(0), \alpha(1))$.  If no such $k$ exists, we define $\TC(f)=\infty.$
\end{definition}
In both papers, several results relate the topological complexity  of maps (the notions are equivalent) to the topological complexity of the domain and codomain, composition of maps, homotopy invariance, and related properties. In subsequent sections, we introduce discrete (or combinatorial) analogs of these two notions and establish their contiguity invariance and related properties.

\section{Scott's discrete topological complexity of simplicial maps}\label{section 3: scott}
In this section, we generalize Scott's notion of topological complexity of maps \cite{scott2022topological} to the discrete setting. We establish discrete analogs of several fundamental results from the continuous setting by extending known results on the discrete topological complexity of simplicial complexes (see Section \ref{discrete TC}, and \cite{fernandez2018discrete} for more details).
 
\begin{definition}
  Let $f: K \to L$ be a simplicial map between two simplicial complexes. Then the discrete topological complexity in the sense of Scott, denoted by $\dcscott(f)$,  is the smallest integer $n$ such that there exists a cover  $\{\Omega_0,\dots, \Omega_n\}$ of $K\Pi K$ consisting of subcomplexes with simplicial maps $s_i:\Omega_i\to L$ satisfying $\Delta_L\circ s_i\sim f^2|_{\Omega_i}$ for each $0\leq i\leq n$ (see the following commutative diagram). 
\[\begin{tikzcd}
 &  & L  \arrow[d, "\Delta_L" ] \\%
 \Omega_i \arrow[r, hook, "i"] \arrow[rru, bend left, "\exists ~ s_i"] & K \Pi K \arrow[r, "f^2"] & L \Pi L.
\end{tikzcd}\]
Each such subcomplex $\Omega_i$ is called an $f$-Farber subcomplex.
\end{definition}

\begin{remark}
   Observe that when $f=\mathrm{Id}_K$, the $\Omega_i$ in the above definition are called Farber subcomplexes and moreover $\dcscott(\mathrm{Id}_K) =\dcscottspace(K)$.
\end{remark}
Next, we study the properties of $\dcscott(f)$. To begin with, we have contiguity invariance.
\begin{proposition}\label[proposition]{prop: contiguity invariance of dcscottmap}
    If $f,~g: K \rightarrow L$ are contiguous simplicial maps between simplicial complexes $K$ and $L$, then $$\dcscott(f)=\dcscott(g).$$
\end{proposition}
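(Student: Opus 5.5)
By symmetry it suffices to prove $\dcscott(g)\le\dcscott(f)$; the reverse inequality follows by swapping the roles of $f$ and $g$. So suppose $\dcscott(f)=n$. Fix a cover $\{\Omega_0,\dots,\Omega_n\}$ of $K\Pi K$ by $f$-Farber subcomplexes with simplicial maps $s_i:\Omega_i\to L$ satisfying $\Delta_L\circ s_i\sim f^2|_{\Omega_i}$. The plan is to show that the same subcomplexes $\Omega_i$, with the same maps $s_i$, are $g$-Farber subcomplexes. Here $\sim$ means "same contiguity class", which is transitive. So it will be enough to show $f^2|_{\Omega_i}\sim g^2|_{\Omega_i}$, and in fact that $f^2\sim_c g^2$ as maps $K\Pi K\to L\Pi L$. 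Restricting a contiguity to a subcomplex is clearly again a contiguity.

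The key step is therefore the following lemma: if $f\sim_c g:K\to L$, then $f\times f\sim_c g\times g: K\Pi K\to L\Pi L$. I would use the standard description of simplices of the categorical product. A set $\tau\subseteq V(K)\times V(K)$ is a simplex of $K\Pi K$ if and only if its projections $p_1(\tau)$ and $p_2(\tau)$ are simplices of $K$. In that description, $f^2(\tau)\cup g^2(\tau)$ is a set of pairs in $L\Pi L$. Its first projection is contained in $f(p_1\tau)\cup g(p_1\tau)$, which is a simplex of $L$ because $f\sim_c g$. Likewise its second projection lies in $f(p_2\tau)\cup g(p_2\tau)$, also a simplex. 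Since faces of simplices are simplices, both projections are simplices, so $f^2(\tau)\cup g^2(\tau)$ is a simplex of $L\Pi L$. This proves $f^2\sim_c g^2$.

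Chaining, we get $\Delta_L\circ s_i\sim f^2|_{\Omega_i}\sim_c g^2|_{\Omega_i}$, so each $\Omega_i$ is $g$-Farber. Hence $\dcscott(g)\le n$. The same argument also gives the statement for $f,g$ merely in the same contiguity class, by induction along the chain of contiguities.

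The only real obstacle is making sure the characterization of simplices of the categorical product is the one in force. This is the definition used in \cite{fernandez2018discrete}, and the product argument in \Cref{prop:multiplicative product inequality for DC} implicitly uses it. Once it is stated, the lemma is routine.
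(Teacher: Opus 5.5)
Your proposal is correct and follows the paper's proof: keep the same cover and the same sections $s_i$, and use that $f\sim g$ implies $f^2\sim g^2$ to transfer the condition from one map to the other. The only difference is that you actually verify $f^2\sim_c g^2$ using the projection description of simplices in $K\Pi K$, whereas the paper simply asserts this step.
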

\begin{proof}
    Let $\dcscott(g)=n$ and let $\Omega \subset K\Pi K$ be a $g$-Farber subcomplex, in other words, we have a simplicial map $\sigma : \Omega \to L$ such that $ \Delta_L \circ \sigma \sim (g \Pi g)|_{\Omega}.$ Note that $f \sim g$ implies $f \Pi f \sim g \Pi g$. Thus, we have $\Delta_L \circ \sigma \sim (f \Pi f)|_{\Omega}$. Thus, $\dcscott(f) \leq \dcscott(g)$. The other inequality is also proved similarly. 
\end{proof}
In the following propositions, we establish analogs of several standard results for the topological complexity of maps (see \cite{scott2022topological}) in the discrete setting.
\begin{proposition}\label[proposition]{prop: dcscottmap bounded by dcspaces}
    Given a simplicial map $f: K \to L$, we have $$\dcscott(f) \leq \mathrm{min}\{\dcscottspace(K), \dcscottspace(L)\}.$$
\end{proposition}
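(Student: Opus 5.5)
We prove the two inequalities $\dcscott(f)\leq \dcscottspace(K)$ and $\dcscott(f)\leq \dcscottspace(L)$ separately. In each case we start from an optimal cover by Farber subcomplexes and turn it into a cover of $K\Pi K$ by $f$-Farber subcomplexes with the same number of pieces. The only tools needed are that contiguity is preserved by composing with a simplicial map on either side, and that restricting to a subcomplex preserves contiguity classes.

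\emph{Bound by $\dcscottspace(K)$.} Let $\dcscottspace(K)=n$ and let $\Omega_0,\dots,\Omega_n$ be Farber subcomplexes covering $K\Pi K$, with $\sigma_i:\Omega_i\to K$ satisfying $\Delta_K\circ\sigma_i\sim \iota_{\Omega_i}$. Set $s_i=f\circ\sigma_i:\Omega_i\to L$. Since $\Delta_L\circ f=f^2\circ\Delta_K$, we get
\[
\Delta_L\circ s_i=f^2\circ\Delta_K\circ\sigma_i\sim f^2\circ\iota_{\Omega_i}=f^2|_{\Omega_i},
\]
because postcomposing a chain of contiguities with the simplicial map $f^2=f\Pi f$ again gives a chain of contiguities. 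So each $\Omega_i$ is $f$-Farber, and $\dcscott(f)\leq n$.

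\emph{Bound by $\dcscottspace(L)$.} Let $\dcscottspace(L)=m$ with Farber subcomplexes $\Omega'_0,\dots,\Omega'_m$ covering $L\Pi L$ and sections $\sigma'_j:\Omega'_j\to L$ satisfying $\Delta_L\circ\sigma'_j\sim\iota_{\Omega'_j}$. Pull these back along $f^2$: set $\Omega_j=(f^2)^{-1}(\Omega'_j)$, the subcomplex of $K\Pi K$ consisting of the simplices whose image under $f^2$ lies in $\Omega'_j$. This is a subcomplex because faces of such simplices map into faces of simplices of $\Omega'_j$. The $\Omega_j$ cover $K\Pi K$, since the image of any simplex is a simplex of $L\Pi L$ and so lies in some $\Omega'_j$. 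Let $g_j=f^2|_{\Omega_j}:\Omega_j\to\Omega'_j$ and define $s_j=\sigma'_j\circ g_j$. Precomposing with $g_j$ preserves contiguity, so
\[
\Delta_L\circ s_j\sim \iota_{\Omega'_j}\circ g_j=f^2|_{\Omega_j}.
\]
Hence $\dcscott(f)\leq m$, and combining the two bounds gives the statement.

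\emph{Main obstacle.} The only point needing care is the covering claim in the second bound. A cover of $L\Pi L$ by subcomplexes means every simplex of $L\Pi L$ lies in some $\Omega'_j$. For a simplex $\tau$ of $K\Pi K$, the set $f^2(\tau)$ is a simplex of the categorical product, so it lies in some $\Omega'_j$, which places $\tau$ in $\Omega_j$. It is essential to define the pullback simplex-wise, as above, rather than vertex-wise; with that definition the cover property follows.
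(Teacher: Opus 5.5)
Your proof is correct and follows the paper's argument. For the bound by $\mathrm{DC}(K)$ you post-compose each Farber section with $f$ using $\Delta_L\circ f=f^2\circ\Delta_K$, and for the bound by $\mathrm{DC}(L)$ you pull the Farber cover of $L\Pi L$ back along $f^2$ and precompose the sections with $f^2$, with your simplex-wise definition of $(f^2)^{-1}(\Omega'_j)$ and the check that these preimages are subcomplexes covering $K\Pi K$ filling in details the paper leaves implicit.
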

\begin{proof}
    First, we prove that $\dcscott(f)$ is less than or equal to $\dcscottspace(K)$. Suppose that $\Omega \subset K \Pi K$ is a Farber subcomplex. Then there exists a simplicial map $\sigma : \Omega \to K$ such that $\Delta_K \circ \sigma \sim i_{\Omega}$. 
    \[\begin{tikzcd}
 & K \arrow[r, "f"]\arrow[d, "\Delta_K"] & L  \arrow[d, "\Delta_L" ] \\%
 \Omega \arrow[r, hook, "i"] \arrow[ru, bend left, " \sigma"] & K \Pi K \arrow[r, "f^2"] & L \Pi L.
\end{tikzcd}\]
Note that from the contiguity commutative square we have $\Delta_L \circ f \sim f^2 \circ \Delta_K$ and hence 
$$\Delta_L \circ f \circ \sigma \sim (f^2 \circ \Delta_K) \circ \sigma \sim f^2 \circ i_{\Omega} = (f^2)|_{\Omega}$$
Therefore, $f\circ \sigma$ acts as the required simplicial map for $\Omega$ to be an $f$-Farber subcomplex and consequently, $\dcscott(f) \leq \dcscottspace(K).$

For the other inequality, let $\Omega \subset L \Pi L$ be a Farber subcomplex. Denote $\Omega':= (f^2)^{-1}(\Omega)$. Let $\sigma:= s \circ f^2|_{\Omega'}.$ Then $$\Delta_L \circ \sigma= \Delta_L \circ s \circ f^2|_{\Omega'} \sim f^2|_{\Omega'}$$ where the last contiguity follows since $\Delta_L \circ s \sim i_{\Omega}.$ This proves that $\Omega'$ is an $f$-Farber subcomplex and hence $\dcscott(f) \leq \dcscottspace(L).$
\end{proof}

\begin{proposition}\label[proposition]{prop: dcscottmap for composition}
    For two simplicial maps $f: K \to L$ and $g: L \to M$, we have $$\dcscott(g \circ f) \leq \mathrm{min}\{\dcscott(f), \dcscott(g)\}.$$
\end{proposition}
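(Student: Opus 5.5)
We prove the two inequalities $\dcscott(g\circ f)\le \dcscott(f)$ and $\dcscott(g\circ f)\le \dcscott(g)$ separately. In each case we take an optimal cover of the relevant categorical product by Farber-type subcomplexes and turn it into a cover of $K\Pi K$ by $(g\circ f)$-Farber subcomplexes of the same size. This follows the pattern of the proof of \Cref{prop: dcscottmap bounded by dcspaces}. Throughout we use two elementary facts about contiguity classes. First, if $\phi\sim\psi$, then $h\circ\phi\sim h\circ\psi$ and $\phi\circ k\sim\psi\circ k$ for simplicial maps $h,k$; this holds for a single contiguity and hence for chains. Second, $(g\circ f)^2=g^2\circ f^2$ and $\Delta_M\circ g=g^2\circ\Delta_L$, the latter holding on the nose.

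For the first inequality, let $\{\Omega_0,\dots,\Omega_n\}$ be a cover of $K\Pi K$ by $f$-Farber subcomplexes with $n=\dcscott(f)$ and maps $s_i:\Omega_i\to L$ satisfying $\Delta_L\circ s_i\sim f^2|_{\Omega_i}$. Set $s_i':=g\circ s_i:\Omega_i\to M$. Then
\[
\Delta_M\circ g\circ s_i = g^2\circ\Delta_L\circ s_i \sim g^2\circ f^2|_{\Omega_i}=(g\circ f)^2|_{\Omega_i},
\]
where the contiguity is obtained by post-composing with $g^2$. So the same cover works for $g\circ f$, and $\dcscott(g\circ f)\le\dcscott(f)$.

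For the second inequality, let $\{\Theta_0,\dots,\Theta_m\}$ be a cover of $L\Pi L$ by $g$-Farber subcomplexes with $m=\dcscott(g)$ and maps $t_j:\Theta_j\to M$ satisfying $\Delta_M\circ t_j\sim g^2|_{\Theta_j}$. Define $\Omega_j':=(f^2)^{-1}(\Theta_j)\subseteq K\Pi K$. This is the full preimage subcomplex, consisting of the simplices of $K\Pi K$ whose image under $f^2$ lies in $\Theta_j$. Since the $\Theta_j$ cover $L\Pi L$, every simplex of $K\Pi K$ maps into some simplex of $L\Pi L$, which lies in some $\Theta_j$, so the $\Omega'_j$ cover $K\Pi K$. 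Put $\sigma_j:=t_j\circ f^2|_{\Omega_j'}$. Precomposing the given contiguity chain with $f^2|_{\Omega'_j}:\Omega'_j\to\Theta_j$ gives
\[
\Delta_M\circ\sigma_j\sim g^2\circ f^2|_{\Omega'_j}=(g\circ f)^2|_{\Omega'_j}.
\]
Hence $\dcscott(g\circ f)\le\dcscott(g)$.

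The only point needing care is the covering claim in the second step. A simplex of $K\Pi K$ must lie entirely in a single $\Omega'_j$, so we need the whole image simplex $f^2(\tau)$ to lie in a single $\Theta_j$. This holds because a cover by subcomplexes means every simplex of $L\Pi L$ belongs to some $\Theta_j$. It is the same point used implicitly in \Cref{prop: dcscottmap bounded by dcspaces}. Everything else is formal manipulation of contiguity classes under composition. Combining the two inequalities gives the minimum.
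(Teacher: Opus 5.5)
Your proof is correct and follows the paper's argument. For $\dcscott(g\circ f)\le\dcscott(f)$ you post-compose the $f$-Farber sections with $g$, using $\Delta_M\circ g=g^2\circ\Delta_L$; for $\dcscott(g\circ f)\le\dcscott(g)$ you pull the $g$-Farber cover back along $f^2$ and use the sections $t_j\circ f^2|_{\Omega'_j}$. Your explicit check that the preimages $(f^2)^{-1}(\Theta_j)$ cover $K\Pi K$ fills in a step the paper leaves implicit.
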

\begin{proof}
   Assume that $\dcscott(f)=n$ and the $f$-Farber subcomplexes $\Omega_0,\dots,\Omega_n$ form a cover of $K \Pi K$. Our goal is to show $\Omega=\Omega_j$ is a $g\circ f$-Farber subcomplex for $j=0, \dots, n$, which would imply $\dcscott(g \circ f) \leq n.$
     \[\begin{tikzcd}
   & & L  \arrow[d, "\Delta_L" ]\arrow[r, "g"] & M\arrow[d, "\Delta_M"] \\%
 \Omega \arrow[r, hook, "i"] & K \Pi K \arrow[r, "f^2"] \arrow[ru, "\sigma|_{\Omega}"] & L \Pi L \arrow[r, "g^2"]\arrow[ru, "\sigma'|_{\Omega'}"] & M \Pi M.
\end{tikzcd}\]
By assumption, we have $\Delta_L \circ \sigma \sim f^2|_{\Omega}$ Then we have 
\begin{align*}
    & g^2 \circ \Delta_L \circ \sigma \sim (g \circ f)^2|_{\Omega}\\
     \implies &\Delta_M \circ g \circ  \sigma \sim (g \circ f)^2|_{\Omega}.
\end{align*}
Thus, $g \circ \sigma : \Omega \to M$ and the above contiguity relation imply that $\Omega$ is a $(g \circ f)$-Farber subcomplex. Now suppose that $\dcscott(g)=m$ and the $g$-Farber subcomplexes $\Omega'_1, \dots , \Omega'_m$ cover $L \Pi L$. Our goal is to show that $\Omega= (f^2)^{-1}(\Omega_j)$ is a $(g \circ f)$-Farber subcomplex for $j=0, \dots, m$, which implies that $\dcscott ( g \circ f) \leq m.$

Note that $\Delta_M \circ \sigma' \sim g^2|_{\Omega'}$. Now $\Delta_M \circ (\sigma' \circ f^2|_{\Omega}) \sim g^2|_{\Omega'} \circ f^2|_{\Omega} = (g\circ f)^2|_{\Omega}$. Thus, $\Omega$ is a $(g \circ f)$-Farber subcomplex and we are done.
\end{proof}
\begin{proposition}
    Suppose $h: K \to L$ is any simplicial map. Then
    \begin{enumerate}
        \item if $h$ has a right contiguity inverse, then $\dcscott(h)=\dcscottspace(L);$
         \item if $h$ has a left contiguity inverse, then $\dcscott(h)=\dcscottspace(K);$
          \item if $h$ is a contiguity equivalence, then $\dcscott(h)=\dcscottspace(K)=\dcscottspace(L).$
    \end{enumerate}
\end{proposition}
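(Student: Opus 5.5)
The plan is to combine Proposition \ref{prop: dcscottmap for composition} with Proposition \ref{prop: contiguity invariance of dcscottmap}; the only genuinely new input is the identification $\dcscott(\mathrm{Id}_K)=\dcscottspace(K)$ from the remark following the definition of $\dcscott$. Here a right contiguity inverse of $h$ means a simplicial map $g: L\to K$ with $h\circ g$ in the contiguity class of $\mathrm{Id}_L$, and a left inverse means $g\circ h$ is in the contiguity class of $\mathrm{Id}_K$.

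First observe that Proposition \ref{prop: contiguity invariance of dcscottmap} is stated for a single contiguity $f\sim_c g$. Applying it along each step of a chain $\phi_0\sim_c\phi_1\sim_c\cdots\sim_c\phi_n$ shows that $\dcscott$ is constant on whole contiguity classes.

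For (1), assume $h\circ g$ lies in the contiguity class of $\mathrm{Id}_L$. Then
\[
\dcscottspace(L)=\dcscott(\mathrm{Id}_L)=\dcscott(h\circ g)\le \dcscott(h),
\]
where the inequality is Proposition \ref{prop: dcscottmap for composition} applied to the composite $h\circ g$ (taking the bound by the second factor). Conversely, Proposition \ref{prop: dcscottmap bounded by dcspaces} gives $\dcscott(h)\le \dcscottspace(L)$, so equality holds.

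For (2), assume $g\circ h$ lies in the contiguity class of $\mathrm{Id}_K$. Then
\[
\dcscottspace(K)=\dcscott(g\circ h)\le \dcscott(h)\le \dcscottspace(K),
\]
using Proposition \ref{prop: dcscottmap for composition} (bound by the first factor) and then Proposition \ref{prop: dcscottmap bounded by dcspaces}.

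For (3), a contiguity equivalence has both a left and a right contiguity inverse, so (1) and (2) together give $\dcscott(h)=\dcscottspace(K)=\dcscottspace(L)$.

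The main point to check is that both cited propositions really hold in the generality we use them. In Proposition \ref{prop: dcscottmap for composition}, the argument needs $g^2$ to preserve contiguity, and the preimage $(f^2)^{-1}(\Omega')$ of a subcomplex must again be a subcomplex that covers when the $\Omega'$ cover. Both facts are immediate: a product of contiguous maps is contiguous, and preimages of subcomplexes under simplicial maps are subcomplexes. Beyond this, the argument is purely formal.
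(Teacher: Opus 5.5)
Your proposal is correct and is essentially the paper's proof: for (1) and (2) you combine contiguity invariance, the identification $\dcscott(\mathrm{Id})=\dcscottspace$, and the composition inequality to get $\dcscott(h)\ge\dcscottspace(L)$ (resp.\ $\dcscottspace(K)$), then close with the upper bound $\dcscott(h)\le\min\{\dcscottspace(K),\dcscottspace(L)\}$, and (3) follows from (1) and (2). Your extra remarks are correct and only make explicit what the paper uses implicitly: single-step contiguity invariance iterates along a chain, and preimages of covering subcomplexes under simplicial maps are again covering subcomplexes.
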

\begin{proof}
    (1) Suppose that $g$ is the right contiguity inverse of $h$. In other words $h \circ g \sim id_L$. Then by \Cref{prop: contiguity invariance of dcscottmap}, we have $\dcscott(h \circ g)=\dcscott(id_L)$. But $\dcscott(id_L)=\dcscottspace(L)$, and by \Cref{prop: dcscottmap for composition} $\dcscott(h \circ g) \leq \dcscott(h)$. Thus, $\dcscott(h) \geq \dcscottspace(L)$. Again by \Cref{prop: dcscottmap bounded by dcspaces} we have $\dcscott(h) \leq \dcscottspace(L)$ and hence the equality follows. 

    (2) Suppose that $f$ is the left contiguity inverse of $h$, so that $f \circ h \sim id_K$. Then by exactly similar analysis to above, we get $$\dcscottspace(K)=\dcscott(id_K)=\dcscott(f \circ h) \leq \dcscott(h) \leq \dcscottspace(K).$$
    Hence $\dcscott(h) =\dcscottspace(K).$

    (3) This follows as a corollary to the first two parts.
\end{proof}
We now establish an analog of \cite[Theorem 3.4]{fernandez2018discrete} in the context of $f$-Farber subcomplexs, where $f$ is a simplicial map.
\begin{theorem} \label{Farber subcomplex and projection maps}
    Let $\Omega \subset K^2$ be a subcomplex of the categorical product. The following conditions are equivalent:
    \begin{enumerate}
      \item $\Omega$ is an $f$-Farber subcomplex,
        \item the restrictions to $\Omega$ of the compositions of the projections with $f^2$ are in the same contiguity class, i.e., $(\pi_1 \circ f^2)|_{\Omega} \sim(\pi_2 \circ f^2)|_{\Omega}$. 
        
    \end{enumerate}
        \end{theorem}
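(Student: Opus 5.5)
We prove both implications directly, using two elementary facts about the categorical product $L \Pi L$. First, a simplicial map into $L\Pi L$ is the same as a pair of simplicial maps into $L$: for $\phi:\Omega\to L\Pi L$, the composites $\pi_1\circ\phi$ and $\pi_2\circ\phi$ determine $\phi$, and every pair $(\phi_1,\phi_2)$ of simplicial maps gives one. Second, contiguity is compatible with this correspondence. Two maps $(\phi_1,\phi_2)$ and $(\psi_1,\psi_2)$ into $L\Pi L$ are contiguous if and only if $\phi_1\sim_c\psi_1$ and $\phi_2\sim_c\psi_2$. This holds because a finite vertex set of $L\Pi L$ is a simplex exactly when both of its projections are simplices of $L$. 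Consequently the same is true for contiguity classes: a chain in $L\Pi L$ projects to chains in each factor, and two chains in the factors can be padded with repeated terms to a common length and then paired. Composition with simplicial maps also preserves contiguity classes. Both facts are routine, and they are what was used implicitly in the proof of \Cref{prop:multiplicative product inequality for DC}.

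\textbf{(1)$\Rightarrow$(2).} Suppose $s:\Omega\to L$ satisfies $\Delta_L\circ s\sim f^2|_{\Omega}$. Post-compose the chain of contiguities with $\pi_1$ and with $\pi_2$. This gives $\pi_k\circ f^2|_\Omega\sim \pi_k\circ\Delta_L\circ s = s$ for $k=1,2$. By transitivity of the contiguity class relation, $(\pi_1\circ f^2)|_\Omega\sim (\pi_2\circ f^2)|_\Omega$.

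\textbf{(2)$\Rightarrow$(1).} Set $s:=(\pi_1\circ f^2)|_{\Omega}=f\circ\pi_1|_\Omega$; we claim it witnesses that $\Omega$ is $f$-Farber. We have $\Delta_L\circ s=(s,s)$ and $f^2|_\Omega=(s,\pi_2\circ f^2|_\Omega)$. By hypothesis $s\sim \pi_2\circ f^2|_\Omega$, and trivially $s\sim s$. The componentwise description of contiguity classes from the first paragraph then gives $(s,s)\sim(s,\pi_2\circ f^2|_\Omega)$, that is, $\Delta_L\circ s\sim f^2|_\Omega$.

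The only point needing care is the componentwise contiguity claim for chains of different lengths. It is handled by padding the shorter chain with constant repetitions, which is legitimate because every map is contiguous to itself. Everything else is formal. This mirrors the argument of \cite[Theorem 3.4]{fernandez2018discrete}, here applied to the maps $f\circ\pi_k$ in place of $\pi_k$.
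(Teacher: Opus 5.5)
Your proof is correct and follows the same route as the paper. For (1)$\Rightarrow$(2) you project the contiguity chain $\Delta_L\circ s\sim f^2|_\Omega$ onto each factor, and for (2)$\Rightarrow$(1) you take $s=(\pi_1\circ f^2)|_\Omega$ and compare $(s,s)$ with $(s,\pi_2\circ f^2|_\Omega)$ componentwise. Your explicit justification of componentwise contiguity classes in $L\Pi L$, including padding chains to a common length, fills in a step the paper uses without comment.
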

        \begin{proof}
            $(1) \implies (2)$ If $\Omega \subset K^2$ is an $f$-Farber subcomplex, then there exists a simplicial map $\sigma: \Omega \rightarrow L$ such that $\Delta \circ \sigma \sim f^2|_{\Omega}$. But $\Delta \circ \sigma$ is the map $(\sigma, \sigma)$ defined by $\omega \mapsto (\sigma(\omega), \sigma(\omega))$ for $\omega \in \Omega$. On the other hand $f^2|_{\Omega}=(\pi_1 \circ f^2|_{\Omega}, \pi_2 \circ f^2|_{\Omega})$. Then
            $$(\sigma, \sigma) \sim (\pi_1 \circ f^2|_{\Omega}, \pi_2 \circ f^2|_{\Omega}),$$
            which implies that
            $$(\pi_1 \circ f^2)|_{\Omega} \sim \sigma \sim (\pi_2 \circ f^2)|_{\Omega}.$$

           $(2) \implies (1)$
           If $(\pi_1 \circ f^2)|_{\Omega} \sim (\pi_2 \circ f^2)|_{\Omega}$, define $\sigma: \Omega \rightarrow L$ by $\sigma=(\pi_1 \circ f^2)|_{\Omega}$. 
           Then \begin{align*}
               \Delta_L \circ \sigma =& (\pi_1 \circ (f^2)|_{\Omega}, \pi_1 \circ (f^2)|_{\Omega})\\
               \sim& (\pi_1 \circ (f^2)|_{\Omega}, \pi_2 \circ (f^2)|_{\Omega}) (\text{~by hypothesis~})\\
               =&(f^2)|_{\Omega}.
           \end{align*}
        \end{proof}
\subsection{Relationship with Simplicial LS-category}
    In this subsection, we generalize the inequalities $\scat(K) \leq \dcscottspace(K)\leq \scat(K^2)$ proved in \cite{fernandez2018discrete} for a simplicial complex $K$ to simplicial maps. 
\begin{theorem}
    For any map $f:K \rightarrow L$ we have 
    $$\scat(f) \leq \dcscott(f).$$
\end{theorem}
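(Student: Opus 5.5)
Fix a vertex $v_0 \in K$ and consider the simplicial map $j: K \to K\Pi K$, $j(v) = (v_0, v)$. This is simplicial because for a simplex $\tau$ of $K$, the image $\{v_0\}\times \tau$ is a simplex of the categorical product. Let $\Omega_0,\dots,\Omega_n$ be $f$-Farber subcomplexes covering $K\Pi K$, with $n=\dcscott(f)$. Set $J_i := j^{-1}(\Omega_i)$, the full preimage subcomplex. The $J_i$ are subcomplexes of $K$. They cover $K$: for any simplex $\tau$ of $K$, the simplex $j(\tau)$ lies in some $\Omega_i$, hence $\tau \subset J_i$. It therefore suffices to show that each $J_i$ is $f$-categorical, which gives $\scat(f)\le n$.

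To see this, apply \Cref{Farber subcomplex and projection maps}. Since $\Omega_i$ is $f$-Farber, $(\pi_1\circ f^2)|_{\Omega_i}$ and $(\pi_2\circ f^2)|_{\Omega_i}$ lie in the same contiguity class. Precomposing a chain of contiguities with a fixed simplicial map preserves contiguity, so precompose with $j|_{J_i}: J_i \to \Omega_i$. This gives
\[
\pi_1\circ f^2\circ j|_{J_i} \sim \pi_2 \circ f^2 \circ j|_{J_i}.
\]
The left-hand map is the constant map with value $f(v_0)$, and the right-hand map is $f|_{J_i}$. Hence $f|_{J_i}\sim *$, so $J_i$ is $f$-categorical.

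The only point needing care is that $J_i$ is a genuine subcomplex on which $j$ restricts to a map into $\Omega_i$. This holds because $J_i$ consists of exactly those simplices $\tau$ with $j(\tau)\in\Omega_i$, and this collection is closed under taking faces since $\Omega_i$ is. I would also check explicitly that contiguity classes are preserved under precomposition: if $\phi\sim_c\psi$, then $\phi(j(\tau))\cup\psi(j(\tau))$ is a simplex, and this propagates along the chain.
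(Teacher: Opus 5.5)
Your proof is correct and follows essentially the same route as the paper: the same map $v\mapsto(v_0,v)$, the same preimage cover $J_i$, and the same comparison of the two projections along it. The only difference is cosmetic. You apply the implication (1)$\Rightarrow$(2) of Theorem~\ref{Farber subcomplex and projection maps} directly and precompose, whereas the paper unwinds the contiguity chain for $\Delta_L\circ\sigma\sim f^2|_{\Omega}$ by hand to obtain $f(v_0)\sim\sigma(v_0,w)\sim f(w)$.
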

\begin{proof}
    If $\dcscott(f) \leq n$, let $K^2= \Omega_0 \cup \cdots \cup \Omega_n$ be a covering by $f$-Farber subcomplexes. Fix a base point $v_0 \in K$ and let $\iota_0:K \rightarrow K^2$ be the simplicial map $\iota_0(w)=(v_0, w)$. Then, let us take the inverse images 
    $$J_l=(\iota_0)^{-1}(\Omega_l), ~l=0, \dots,n.$$
    Since $K= J_0 \cup \cdots \cup J_n$, if we prove that each $J_l$ is an $f$-categorical subcomplex, then we can conclude that $\scat(f) \leq n$, and the result follows.

    Let $\Omega \subset K^2$ be an $f$-Farber subcomplex, with a local section $\sigma: \Omega \rightarrow L$, and let
    $$J=(\iota_0)^{-1}(\Omega) \subset K.$$
    We will show that the restriction map $f|_J: J \rightarrow L$ belongs to the same contiguity class of the constant map $f(v_0): J \rightarrow L$, proving that $J$ is an $f$-categorical subcomplex of $K$.

    Since, $\Delta_L \circ \sigma \sim f^2|_{\Omega}$, there is a sequence of simplicial maps $\psi_i: \Omega \rightarrow L^2, ~i=1, \dots, m$, such that
    $$\Delta_L \circ \sigma= \psi_1 \sim_c \dots \sim_c \psi_m=f^2|_{\Omega}.$$
    Then, by composition,
    \begin{align} \label{pr_1}
       \pi_1 \circ \psi_1 \circ \iota_0|_J \sim_c \dots \sim_c \pi_1 \circ \psi_m \circ \iota_0|_{J}, 
    \end{align}
    where, for every $w \in J$, we have
    $$\pi_1 \circ \psi_1 \circ \iota_0 (w)= \pi_1 \circ \Delta_L \circ \sigma (v_0, w)=\sigma(v_0, w),$$
    and $$\pi_1 \circ \psi_m \circ \iota_0 (w)= \pi_1 \circ f^2|_{\Omega}(v_0, w)=f(v_0).$$
    On the other hand 
    \begin{align} \label{pr_2}
        \pi_2 \circ \psi_1 \circ \iota_0|_J \sim_c \dots \sim_c \pi_2 \circ \psi_m \circ \iota_0|_J,
    \end{align}
    where, for every $w \in J$, we have
    $$\pi_2 \circ \psi_1 \circ \iota_0 (w)= \pi_2 \circ \Delta_L \circ \sigma (v_0, w)=\sigma(v_0, w),$$
    and $$\pi_2 \circ \psi_m \circ \iota_0 (w)= \pi_2 \circ f^2|_{\Omega}(v_0, w)=f(w).$$
    Thus, using Theorem \ref{Farber subcomplex and projection maps}, from (\ref{pr_1}) and (\ref{pr_2}) it follows
    $$f(v_0) \sim \sigma(v_0, w) \sim f(w), ~~\mbox{ for every}~ w \in J,$$
    or equivalently, $f(v_0) \sim f|_J$. Hence, $J$ is an $f$-categorical subcomplex.
\end{proof}
We recall \cite[Lemma 4.4]{fernandez2018discrete}, which will be used to prove the next inequality.
\begin{lemma} \label{edge-path}
    An abstract simplicial complex $L$ is edge-path connected if and only if two arbitrary constant maps $K \to L$ are in the same contiguity class.
\end{lemma}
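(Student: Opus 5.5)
One direction is immediate. If $L$ is edge-path connected, take two vertices $a,b$ of $L$ and the constant maps $c_a,c_b\colon K\to L$. Since $L$ is edge-path connected, there is an edge path $a=v_0,v_1,\dots,v_k=b$ with each $\{v_{j-1},v_j\}$ a simplex of $L$. I then consider the chain of constant maps $c_{v_0},c_{v_1},\dots,c_{v_k}$. For any simplex $\sigma\in K$ we have $c_{v_{j-1}}(\sigma)\cup c_{v_j}(\sigma)=\{v_{j-1},v_j\}$, which is a simplex of $L$. Hence $c_{v_{j-1}}\sim_c c_{v_j}$ for each $j$, and $c_a$ and $c_b$ lie in the same contiguity class.

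For the converse, suppose any two constant maps $K\to L$ lie in the same contiguity class. Fix vertices $a,b$ of $L$ and a chain $c_a=\phi_0\sim_c\phi_1\sim_c\dots\sim_c\phi_n=c_b$. Pick any vertex $w\in K$; here we use that $K$ is nonempty. For each $i$, applying contiguity of $\phi_{i-1}$ and $\phi_i$ to the $0$-simplex $\{w\}$ shows that $\{\phi_{i-1}(w),\phi_i(w)\}$ is a simplex of $L$. So $\phi_{i-1}(w)$ and $\phi_i(w)$ are either equal or joined by an edge. Therefore the sequence $a=\phi_0(w),\phi_1(w),\dots,\phi_n(w)=b$, after deleting consecutive repetitions, is an edge path from $a$ to $b$, and $L$ is edge-path connected.

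The only point needing care is the converse: it requires $K$ to have at least one vertex, which holds since complexes are assumed connected and hence nonempty. I would also observe that the intermediate maps $\phi_i$ need not be constant. This is why I evaluate them at a single vertex, which reduces the contiguity chain to a chain of adjacent vertices of $L$. Apart from this, both directions are direct verifications from the definition of contiguity.
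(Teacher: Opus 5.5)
Your proof is correct and is essentially the standard argument. The paper gives no proof of its own and only cites \cite[Lemma 4.4]{fernandez2018discrete}, whose argument runs the same way: constant maps at adjacent vertices are contiguous, so they can be chained along an edge path, and conversely a contiguity chain is evaluated at a single vertex to extract an edge path (your caveat that $K$ must be nonempty is the right one).
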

\begin{theorem} \label{DS(f)-scat(f^2) Scott}
    Let $f: K \to L$ be a simplicial map such that $L$ is edge-path connected. Then  $$\dcscott(f) \leq \scat(f^2).$$
\end{theorem}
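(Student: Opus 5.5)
The plan is to show that every $f^2$-categorical subcomplex of $K^2$ is already an $f$-Farber subcomplex. A cover of $K^2$ realizing $\scat(f^2)$ would then also realize an upper bound for $\dcscott(f)$. Here $f^2 = f\Pi f : K\Pi K \to L\Pi L$ and $\scat(f^2)$ is computed on the domain $K^2$. So suppose $\scat(f^2)=n$ and let $K^2=\Omega_0\cup\dots\cup\Omega_n$, where each $\Omega_j$ is $f^2$-categorical, i.e.\ $f^2|_{\Omega_j}\sim *$ for some constant map $\Omega_j\to L^2$ with value $(a,b)$.

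By \Cref{Farber subcomplex and projection maps}, it suffices to prove $(\pi_1\circ f^2)|_{\Omega}\sim(\pi_2\circ f^2)|_{\Omega}$ for $\Omega=\Omega_j$. Take a chain of contiguities
\[
f^2|_\Omega=\psi_0\sim_c\psi_1\sim_c\cdots\sim_c\psi_m=(a,b).
\]
Post-composing with the projections preserves contiguity, since $\pi_k$ is simplicial and the image of a simplex of $L^2$ under $\pi_k$ is a simplex of $L$. This gives $(\pi_1\circ f^2)|_\Omega\sim a$ and $(\pi_2\circ f^2)|_\Omega\sim b$, where $a,b$ denote constant maps $\Omega\to L$.

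Next I would join the two constants using \Cref{edge-path}. Because $L$ is edge-path connected, the constant maps $a$ and $b$ from $\Omega$ to $L$ lie in the same contiguity class. Chaining gives
\[
(\pi_1\circ f^2)|_\Omega\sim a\sim b\sim(\pi_2\circ f^2)|_\Omega .
\]
Hence $\Omega$ is an $f$-Farber subcomplex. An alternative route is to take the section $\sigma$ to be the constant map $a$ and check $\Delta_L\circ\sigma=(a,a)\sim(a,b)\sim f^2|_\Omega$ directly; this uses the edge path from $b$ to $a$ in the second coordinate. Either way, the cover $\{\Omega_j\}$ witnesses $\dcscott(f)\le n$.

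The only delicate point I expect is confirming that contiguity classes behave well under products and projections in the categorical product. Concretely, I need to check two things. First, a sequence of contiguities into $L^2$ projects to sequences of contiguities into $L$. Second, in the direct route, contiguity of constant maps in one factor together with the identity in the other factor gives contiguity in $L\Pi L$. This uses that $\{(a,v),(a,w)\}$ is a simplex of $L\Pi L$ whenever $\{v,w\}$ is an edge of $L$. Both follow from the definition of simplices of the categorical product, whose projections are simplices, so I anticipate no real obstacle.
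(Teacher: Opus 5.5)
Your proof is correct and follows essentially the paper's argument: you project a contiguity chain from $f^2|_\Omega$ to a constant map onto each factor, then invoke Theorem~\ref{Farber subcomplex and projection maps}. The only cosmetic difference is that you join the two constants $a$ and $b$ by Lemma~\ref{edge-path} in $L$ after projecting, whereas the paper first replaces the constant $(v_0,w_0)$ by a diagonal constant $(v_0,v_0)$ in $L^2$ and then projects.
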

\begin{proof}
    Assume that $\scat(f^2)=n$ and let $K^2= \Omega_0 \cup \cdots \cup \Omega_n$ be an $f^2$-categorical covering of $K^2$. Our goal is to show that $\Omega_j$ is an $f$-Farber subcomplex for $j=0,\dots,n$, which would imply $\dcscott(f)\leq n$, proving the theorem. 
    
    Set $\Omega:=\Omega_j$ for some $j$.
    By our assumption, $f^2|_{\Omega} \sim *,$ where $*: \Omega \to L^2$ is some constant map whose image is $\{(v_0,w_0)\}.$ Note that the edge-path connectedness of $L$ allows us to choose $*$ such that $v_0=w_0.$ Now by the contiguity relation, there is a sequence of simplicial maps, each one contiguous to the next one (using Lemma \ref{edge-path}). 
    $$f^2|_{\Omega}= \psi_1 \sim_c \cdots \sim_c \psi_m=(v_0,v_0),$$
    where $\psi_j: \Omega \to L^2$. If $\pi_j: L^2 \to L$ are the projection maps onto the $j$-th factor for $j=1,2$, then $\pi_j \circ \psi_k: \Omega \to L$ is contiguous to $\pi_j \circ \psi_{k+1}$. 
    Thus we have $$\pi_1 \circ f^2|_{\Omega} \sim \pi_1 \circ \psi_m = v_0.$$
    Similarly for the second projection map we get $$\pi_2 \circ f^2|_{\Omega} \sim \pi_2 \circ \psi_m = v_0.$$
   This gives us  $\pi_1 \circ f^2|_{\Omega} \sim \pi_2 \circ f^2|_{\Omega}$. From Theorem \ref{Farber subcomplex and projection maps}, we conclude that $\Omega$ is an $f$-Farber subcomplex. 
\end{proof}

We now present an immediate consequence of Theorem \ref{DS(f)-scat(f^2) Scott}.
\begin{corollary}
    A simplicial map $f: K \to L$ is nullcontiguous if and only if $\dcscott(f)=0.$
\end{corollary}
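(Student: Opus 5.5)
The plan is to prove the two directions separately, sandwiching $\dcscott(f)$ between $\scat(f)$ and $\scat(f^2)$. Here "nullcontiguous" means that $f$ lies in the contiguity class of a constant map $K \to L$, which is exactly the statement $\scat(f)=0$.

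For the direction "$\dcscott(f)=0 \Rightarrow f$ nullcontiguous" I would invoke the theorem $\scat(f) \leq \dcscott(f)$. It gives $\scat(f)=0$, so $K$ itself is $f$-categorical, i.e.\ $f \sim *$.

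For the converse, suppose $f \sim c_{v_0}$, the constant map at a vertex $v_0\in L$. I would show that $f^2 = f \Pi f \sim c_{v_0} \Pi c_{v_0} = c_{(v_0,v_0)}$ as maps $K^2 \to L^2$. Take a chain $f=\phi_0 \sim_c \phi_1 \sim_c \dots \sim_c \phi_n = c_{v_0}$. Then $\phi_i \Pi \phi_i \sim_c \phi_{i+1}\Pi\phi_{i+1}$ holds stepwise. The reason is that a simplex of $K\Pi K$ projects to simplices $\tau_1,\tau_2$ of $K$. Its image under the union of the two product maps lies in $(\phi_i(\tau_1)\cup\phi_{i+1}(\tau_1)) \times (\phi_i(\tau_2)\cup\phi_{i+1}(\tau_2))$. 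This is a product of two simplices of $L$, so every subset of it is a simplex of the categorical product. This is the same fact already used in the proof of \Cref{prop: contiguity invariance of dcscottmap}. Hence $\scat(f^2)=0$.

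Since $L$ is assumed connected throughout, \Cref{DS(f)-scat(f^2) Scott} yields $\dcscott(f) \leq \scat(f^2) = 0$. Alternatively, I could argue directly: the single subcomplex $\Omega = K^2$ with $s = c_{v_0}$ satisfies $\Delta_L \circ s = c_{(v_0,v_0)} \sim f^2$. So $K^2$ is an $f$-Farber subcomplex, and $\dcscott(f)=0$.

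The only point needing care is the claim that $f\sim g$ implies $f\Pi f \sim g\Pi g$, i.e.\ that contiguity is compatible with the categorical product. This is the main, though mild, obstacle, and it is handled by the simplex-by-simplex check above. Everything else is a direct application of the two preceding theorems.
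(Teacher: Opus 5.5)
Your proposal is correct and matches the paper's intended argument: the paper states the corollary as an immediate consequence of $\scat(f)\le\dcscott(f)\le\scat(f^2)$. You also supply the one step the paper leaves implicit, namely that $f\sim *$ forces $f\Pi f\sim *$, via the simplex-by-simplex check. Your direct alternative (taking $\Omega=K^2$ with $s=c_{v_0}$) is also valid, and it has the small advantage of not needing the edge-path connectedness of $L$.
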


\subsection{Comparison with the TC of the geometric realization}
Given a simplicial map $f: K \to L$, we have a corresponding induced map on the geometric realization, denoted by $|f |: |K| \to |L|.$ In \cite[Theorem 5.2]{fernandez2018discrete}, the authors have shown that the discrete TC of abstract simplicial complexes bounds the TC of its geometric realization. In this section, we generalize their results to simplicial maps. Before we present this generalization, we recall the following homotopy equivalence from \cite[Lemma 5.1]{fernandez2018discrete}.
\begin{lemma}\label{lemma: |K|*|K| htopy eq |K^2|}
    There exists a homotopy equivalence $u : |K| \times |K| \to |K^2|$ satisfying the property that (upto homotopy) $$|\pi_i |\circ u = p_i ~\text{for}~ i=1,2,$$ where $p_i: |K| \times |K| \to |K|$ and $\pi_i: K^2 \to K$ are projection maps.
\end{lemma}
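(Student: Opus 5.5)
The proof follows \cite[Lemma 5.1]{fernandez2018discrete}, using the standard comparison between the categorical product and the Cartesian product of geometric realizations. First we construct a map $v : |K^2| \to |K| \times |K|$, namely $v = (|\pi_1|, |\pi_2|)$. This is continuous because each projection $\pi_i : K^2 \to K$ is simplicial, and by construction $p_i \circ v = |\pi_i|$. The task is then to produce $u$ in the opposite direction and to show that $u$ and $v$ are homotopy inverses. The property $|\pi_i| \circ u \simeq p_i$ then follows automatically, since $|\pi_i| \circ u = p_i \circ v \circ u \simeq p_i$.

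To show that $v$ is a homotopy equivalence, we pass to posets. Let $\mathcal{X}(K)$ denote the face poset of $K$, so that $|K| \cong |\mathcal{K}(\mathcal{X}(K))|$ via barycentric subdivision. Since the order complex of a product of posets triangulates the product of realizations, we have
\[
|K| \times |K| \cong |\mathcal{K}(\mathcal{X}(K) \times \mathcal{X}(K))|.
\]
Simplices of $K^2$ are exactly the subsets of $\sigma \times \tau$ that project onto $\sigma$ and $\tau$. Define the order-preserving map $\rho : \mathcal{X}(K^2) \to \mathcal{X}(K) \times \mathcal{X}(K)$ by $\rho(\eta) = (\pi_1(\eta), \pi_2(\eta))$. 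Quillen's fiber lemma then shows that $\rho$ is a weak, hence homotopy, equivalence. Indeed, the fiber $\rho^{-1}(\downarrow(\sigma,\tau))$ consists of the simplices $\eta \subseteq \sigma \times \tau$, which is the full simplex on the vertex set $\sigma \times \tau$ and therefore contractible. Moreover, $|\rho|$ agrees with $v$ under the identifications above, up to the standard linear homotopy between a simplicial map and its subdivision. So $v$ is a homotopy equivalence, and we take $u$ to be a homotopy inverse.

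The main obstacle is the compatibility step: checking that, after the subdivision homeomorphisms, $|\rho|$ really is homotopic to $(|\pi_1|, |\pi_2|)$ and that the projections match. I would verify this on each simplex. Both maps send a point of the open simplex $\eta$ into the closed cell $|\pi_1(\eta)| \times |\pi_2(\eta)|$, which is convex, so the straight-line homotopy between them is well defined.
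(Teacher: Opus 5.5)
Your proof is correct. The paper gives no argument of its own and simply imports this statement from \cite[Lemma 5.1]{fernandez2018discrete}. Your face-poset argument is a sound, self-contained reconstruction of that cited result: the fibres $\rho^{-1}(\downarrow(\sigma,\tau))$ are contractible, the straight-line homotopy stays inside the convex cells $|\pi_1\eta|\times|\pi_2\eta|$, and $|\pi_i|\circ u=p_i\circ v\circ u\simeq p_i$. If you want to bypass Quillen's fibre lemma, note that $\iota(\sigma,\tau)=\sigma\times\tau$ is order-preserving with $\rho\circ\iota=\mathrm{id}$ and $\iota\circ\rho\ge\mathrm{id}$, so $\rho$ already induces a homotopy equivalence of order complexes, with explicit homotopy inverse induced by $\iota$.
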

\begin{theorem}
    For a simplicial map $f :  K \to L$, we have $\Tcscott( |f|) \leq \dcscott(f).$ 
\end{theorem}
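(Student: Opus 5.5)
We follow the strategy of \cite[Theorem 5.2]{fernandez2018discrete}, adapted to the map $f$. Suppose $\dcscott(f)=n$ and let $\Omega_0,\dots,\Omega_n$ be an $f$-Farber cover of $K^2$. By \Cref{Farber subcomplex and projection maps}, on each $\Omega_j$ we have $(\pi_1\circ f^2)|_{\Omega_j}\sim(\pi_2\circ f^2)|_{\Omega_j}$. Note that $\pi_i\circ f^2=f\circ\pi_i$, where the $\pi_i$ on the right are the projections $K^2\to K$. Contiguous simplicial maps have homotopic geometric realizations, so maps in the same contiguity class do as well. Hence
\[
|f|\circ|\pi_1|\big|_{|\Omega_j|}\simeq |f|\circ|\pi_2|\big|_{|\Omega_j|}.
\]

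Next we transport this cover to $|K|\times|K|$ using the homotopy equivalence $u$ of \Cref{lemma: |K|*|K| htopy eq |K^2|}. Set $V_j:=u^{-1}(|\Omega_j|)$. These sets cover $|K|\times|K|$, since the $|\Omega_j|$ cover $|K^2|$. On $V_j$ we obtain
\[
|f|\circ p_1 \simeq |f|\circ|\pi_1|\circ u \simeq |f|\circ |\pi_2|\circ u\simeq |f|\circ p_2 .
\]
The first and last homotopies come from the lemma (restricted to $V_j$), and the middle one is the homotopy above precomposed with $u|_{V_j}$. A homotopy $H:V_j\times I\to|L|$ from $|f|\circ p_1$ to $|f|\circ p_2$ is exactly an $|f|$-motion planner $V_j\to P|L|$, namely $(x_0,x_1)\mapsto H(x_0,x_1,\cdot)$. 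Its endpoints are $|f|(x_0)$ and $|f|(x_1)$, as Scott's definition requires.

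The main obstacle is that Scott's $\TC$ requires \emph{open} covers, while the $V_j$ are closed, being preimages of subcomplexes. We handle this as in \cite{fernandez2018discrete}. Each $|\Omega_j|$ is a subcomplex of a CW (polyhedral) complex, so it has an open neighborhood $N_j$ that deformation retracts onto it, for example a regular neighbourhood in a subdivision. Then $U_j:=u^{-1}(N_j)$ is open and contains $V_j$. Composing with the retraction $r_j:N_j\to|\Omega_j|$, which is homotopic to the inclusion within $|K^2|$, extends the homotopy $|f|\circ|\pi_1|\simeq|f|\circ|\pi_2|$ from $|\Omega_j|$ to $N_j$. The chain of homotopies on $V_j$ then carries over to $U_j$. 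The open sets $U_0,\dots,U_n$ cover $|K|\times|K|$ and each carries an $|f|$-motion planner, so $\Tcscott(|f|)\le n$.

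Alternatively, one could note that for reasonable spaces such as ENRs, $\TC$ defined by closed covers agrees with $\TC$ defined by open covers, and the same argument applies to Scott's version.
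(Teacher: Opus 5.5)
Your proof is correct and follows the same route as the paper: pull the $f$-Farber cover back along $u$, then combine the projection characterization of \Cref{Farber subcomplex and projection maps} with $|\pi_i|\circ u\simeq p_i$ to get $|f|\circ p_1\simeq|f|\circ p_2$ on each piece. The only difference is that the paper stops at the closed cover $u^{-1}(|\Omega_j|)$ without comment, whereas your deformation-retract neighbourhood step explicitly produces the open cover that Scott's definition requires.
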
 
\begin{proof}
    Let $\dcscott(f) \leq n$ and let the $f$-Farber subcomplexes $\Omega_1, \dots,\Omega_n$ form a cover for $K \Pi K.$ Then for each of the $f$-Farber subcomplexes $\Omega = \Omega_j$ we have $\Delta_L \circ \sigma \sim f^2|_{\Omega}.$ We have a map $|f^2 |:  |K^2 |\to |L^2|$ and by the construction of geometric realization, $f^2|_{\Omega}$ is the map $|f^2||_{|\Omega|}.$  Since contiguous maps induce continuous homotopic maps, applying the geometric realization functor yields homotopic maps $|\pi_1 |\circ |f^2||_{|\Omega|} \simeq |\pi_2 |\circ |f^2||_{|\Omega|}$. Consider the closed subspaces $F:=F_j:=u^{-1}(\Omega) \subset |K| \times |K|$, which form a closed covering of $|K| \times |K|.$ Then by \Cref{lemma: |K|*|K| htopy eq |K^2|} we have the following equality, upto homotopy $$p_1 \circ (|f^2|_F) = |\pi_1 |\circ u \circ (|f^2|_F)$$ but $u \circ (|f^2|_F) = |f^2||_{u(F)} = |f^2|_{\Omega}$. Hence,  $p_1 \circ (|f^2|_F) = |\pi_1 |\circ |f^2|_{\Omega}$. Similarly, one obtains $p_2 \circ (|f^2|_F) = |\pi_2 |\circ |f^2|_{\Omega}$. We have $\pi_1 \circ f^2|_{\Omega} \sim \pi_2 \circ f^2|_{\Omega}$ by \Cref{Farber subcomplex and projection maps}. Therefore, $p_1 \circ (|f^2|_F)$ is homotopic to $p_2 \circ (|f^2|_F)$. We conclude that $\Tcscott( |f|) \leq n.$ 
\end{proof}

\subsection{Examples}
\begin{example}
    Let $f : K \to L$ be a simplicial map. If either of $K$ or $L$ is a tree, then we have $\dcscott(f)=0.$ More generally, this is true if either of $K$ or $L$ is strongly collapsible.
\end{example}

\begin{example}
\normalfont{
    Consider the two-sheeted covering map $p: S^1 \to \mathbb{R}P^1$ given by identifying the antipodal points. We compute $\dcscott(p)$. First, we obtain an upper bound using \Cref{prop: dcscottmap bounded by dcspaces}. We consider the following simplicial models for $S^1$ and $\mathbb{R}P^1.$
    
    Note that one can choose a $2n$-gon and $n$-gon model for $S^1$ and $\mathbb{R}P^1$, respectively. For the latter case, it gives rise to an $(n+1) \times (n+1)$ grid to represent $\mathbb{R}P^1 \Pi \mathbb{R}P^1$, whose geometric realization is homotopy equivalent to a torus. In the figure below it is a $5 \times 5$ grid and we describe the three $p$-Farber subcomplexes $\Omega_1, \Omega_2$ and $\Omega_3$, implying $\dcscott(p)\leq 2.$ Clearly, $\dcscott(p)$ has to be either $1$ or $2$, as $p$ is not nullhomotopic.

\providecommand{\xs}{\tiny}

\begin{figure}[htbp]
\centering
\begin{tikzpicture}[scale=0.7, transform shape]

  \begin{scope}[shift={(0,0)}]
    \node[above right, font=\Large\bfseries] at (-0.3, 5.2)  {};
    
    \foreach \x in {0,1,2,3} {
      \foreach \y in {1,2,3,4} {
        \fill[omegaOne] (\x,\y) rectangle (\x+1,\y+1);
      }
    }
  \end{scope}

  \begin{scope}[shift={(8.2,0)}]
    \node[above right, font=\Large\bfseries] at (-0.3, 5.2)  {};
    
    \foreach \x in {1,2,3,4} {
      \foreach \y in {0,1,2,3} {
        \fill[omegaTwo] (\x,\y) rectangle (\x+1,\y+1);
      }
    }
  \end{scope}

  \begin{scope}[shift={(16.4,0)}]
    \node[above right, font=\Large\bfseries] at (-0.3, 5.2)  {};
    
    \foreach \x/\y in {0/0, 4/0, 0/4, 4/4} {
      \fill[omegaThree] (\x,\y) rectangle (\x+1,\y+1);
    }
  \end{scope}

  \foreach \xShift in {0, 8.2, 16.4} {
    \begin{scope}[shift={(\xShift,0)}]
      
      \foreach \x in {0,...,4} {
        \foreach \y in {0,...,4} {
          \draw[thick] (\x, \y+1) -- (\x+1, \y);
          \draw[dashed, semithick] (\x, \y) -- (\x+1, \y+1);
        }
      }

      \draw[thick] (0,0) grid (5,5);

      \foreach \x in {0,...,5} {
        \foreach \y in {0,...,5} {
          \fill[black] (\x,\y) circle (2.5pt);
        }
      }

      \node[above=2pt, font=\small\itshape] at (0,5) {aa};
      \node[above=2pt, font=\small\itshape] at (1,5) {ba};
      \node[above=2pt, font=\small\itshape] at (2,5) {ca};
      \node[above=2pt, font=\small\itshape] at (3,5) {da};
      \node[above=2pt, font=\small\itshape] at (4,5) {ea};
      \node[above=2pt, font=\small\itshape] at (5,5) {aa};

      \node[below=2pt, font=\small\itshape] at (0,0) {aa};
      \node[below=2pt, font=\small\itshape] at (1,0) {ba};
      \node[below=2pt, font=\small\itshape] at (2,0) {ca};
      \node[below=2pt, font=\small\itshape] at (3,0) {da};
      \node[below=2pt, font=\small\itshape] at (4,0) {ea};
      \node[below=2pt, font=\small\itshape] at (5,0) {aa};

      \node[left=2pt, font=\small\itshape] at (0,1) {ab};
      \node[left=2pt, font=\small\itshape] at (0,2) {ac};
      \node[left=2pt, font=\small\itshape] at (0,3) {ad};
      \node[left=2pt, font=\small\itshape] at (0,4) {ae};

      \node[right=2pt, font=\small\itshape] at (5,1) {ab};
      \node[right=2pt, font=\small\itshape] at (5,2) {ac};
      \node[right=2pt, font=\small\itshape] at (5,3) {ad};
      \node[right=2pt, font=\small\itshape] at (5,4) {ae};

      \node[above right=-2pt, font=\xs\itshape] at (1,1) {bb};
      \node[above right=-2pt, font=\xs\itshape] at (2,2) {cc};
      \node[above right=-2pt, font=\xs\itshape] at (3,3) {dd};
      \node[above right=-2pt, font=\xs\itshape] at (4,4) {ee};
      \node[above right=-2pt, font=\xs\itshape] at (2,1) {cb};
      \node[above right=-2pt, font=\xs\itshape] at (1,2) {bc};
      \node[above right=-2pt, font=\xs\itshape] at (3,2) {dc};
      \node[above right=-2pt, font=\xs\itshape] at (3,1) {db};
      \node[above right=-2pt, font=\xs\itshape] at (3,4) {de};
      \node[above right=-2pt, font=\xs\itshape] at (2,4) {ce};
      \node[above right=-2pt, font=\xs\itshape] at (1,4) {be};
      \node[above right=-2pt, font=\xs\itshape] at (1,3) {bd};
      \node[above right=-2pt, font=\xs\itshape] at (2,3) {cd};
      \node[above right=-2pt, font=\xs\itshape] at (4,1) {eb};
      \node[above right=-2pt, font=\xs\itshape] at (4,2) {ec};
      \node[above right=-2pt, font=\xs\itshape] at (4,3) {ed};

    \end{scope}
  }

\end{tikzpicture}
\caption{Covering  by three p-Farber subcomplexes $\Omega_1, \Omega_2,$ and $\Omega_3$.}
\label{fig:three_omegas_5x5}
\end{figure}

    It remains to show that $\mathbb{R}P^1 \Pi \mathbb{R}P^1$ cannot be covered by two $p$-Farber subcomplexes, say $\Omega$ and $\Omega'$. Consider the map $i_0: \mathbb{R}P^1 \to \mathbb{R}P^1 \Pi \mathbb{R}P^1$ given by $i_0(v)=(v_0,v)$. Since $\Omega$ is a $p$-Farber subcomplex, \Cref{Farber subcomplex and projection maps} implies that $i_0^{-1}(\Omega)$ is a $p$-categorical subcomplex of $\mathbb{R}P^1$. As $\mathbb{R}P^1$ is not strongly collapsible, $i_0^{-1}(\Omega)$ is not the whole of $\mathbb{R}P^1$ and hence $\Omega$ cannot contain three consecutive vertical edges. Similarly, $\Omega'$ cannot contain three horizontal edges. As a consequence, $\Omega$ and $\Omega'$ do not cover $\mathbb{R}P^1 \Pi \mathbb{R}P^1$, concluding $\dcscott(p) \geq 2$.
    }
\end{example}
\section{Murillo-Wu's discrete topological complexity of simplicial maps}\label{section 4 M-W}
In \cite{murillo2021topological}, Murillo--Wu introduced a notion of topological complexity for a continuous map. In this section, we consider its discrete analog in the context of simplicial maps. Analogous to the continuous case, as shown by Scott \cite[Theorem 3.4]{scott2022topological}, we prove that the newly defined discrete topological complexity of a simplicial map coincides with the previously defined notion of discrete topological complexity.

\begin{definition}\label[definition]{defn: Murillo-Wu}
   Let $f \colon K \to L$ be a simplicial map between two simplicial complexes. Then the discrete topological complexity in the sense of Murillo-Wu, denoted by $\dcMoriloWu(f)$,  is the smallest integer $n$ such that there exists a cover  $\{\Omega_0,\dots, \Omega_n\}$ of $K\Pi K$ consisting of subcomplexes with simplicial maps $s_i \colon \Omega_i\to K$ satisfying $f^2 \circ \Delta_K \circ s_i\sim f^2|_{\Omega_i}$ for each $0\leq i\leq n$ (see the following commutative diagram),
 \[\begin{tikzcd}
   & K  \arrow[d, "\Delta_K" ] & \\%
 \Omega_i \arrow[r, hook, "i"] \arrow[ru, bend left, "\exists ~ s_i"] & K \Pi K \arrow[r, "f^2"] & L \Pi L.
\end{tikzcd}\]
\end{definition}

In the following proposition, we relate the contiguity invariants $\dcscott(-)$ with $\dcMoriloWu(-)$ and show that they are equal.
\begin{proposition}\label[proposition]{prop: Sc=MW}
    Let $f: K \to L$ be a simplicial map between two simplicial complexes $K$ and $L.$ Then $$\dcscott(f) =\dcMoriloWu(f).$$
\end{proposition}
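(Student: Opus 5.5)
The plan is to prove the two inequalities $\dcscott(f)\le\dcMoriloWu(f)$ and $\dcMoriloWu(f)\le\dcscott(f)$ separately. In each direction I will show that every subcomplex $\Omega\subseteq K\Pi K$ admitting a section of one kind also admits a section of the other kind. Then the same cover $\{\Omega_0,\dots,\Omega_n\}$ serves for both invariants.

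\emph{Step 1 ($\dcscott(f)\le\dcMoriloWu(f)$).} First I check the strict identity $f^2\circ\Delta_K=\Delta_L\circ f$ of simplicial maps $K\to L\Pi L$; both send $v$ to $(f(v),f(v))$. Suppose $s\colon\Omega\to K$ satisfies $f^2\circ\Delta_K\circ s\sim f^2|_{\Omega}$. Put $\sigma:=f\circ s\colon\Omega\to L$. Then
\[
\Delta_L\circ\sigma=\Delta_L\circ f\circ s=f^2\circ\Delta_K\circ s\sim f^2|_{\Omega},
\]
so $\Omega$ is an $f$-Farber subcomplex. Hence any Murillo--Wu cover is also a Scott cover.

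\emph{Step 2 ($\dcMoriloWu(f)\le\dcscott(f)$).} Let $\Omega$ be an $f$-Farber subcomplex. By Theorem~\ref{Farber subcomplex and projection maps},
\[
(\pi_1\circ f^2)|_{\Omega}=f\circ\pi_1|_{\Omega}\sim f\circ\pi_2|_{\Omega}=(\pi_2\circ f^2)|_{\Omega}.
\]
I take $s:=\pi_1|_{\Omega}\colon\Omega\to K$, which is simplicial. Then
\[
f^2\circ\Delta_K\circ s=(f\pi_1|_{\Omega},\,f\pi_1|_{\Omega})
\quad\text{and}\quad
f^2|_{\Omega}=(f\pi_1|_{\Omega},\,f\pi_2|_{\Omega}).
\]
It remains to show that these two maps into $L\Pi L$ are in the same contiguity class. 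The first coordinates agree. For the second coordinates, take a chain $f\pi_1|_{\Omega}=\phi_0\sim_c\cdots\sim_c\phi_m=f\pi_2|_{\Omega}$. The maps $(f\pi_1|_{\Omega},\phi_k)$ form a chain in $L\Pi L$, and consecutive maps are contiguous.

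\emph{Main obstacle.} The only point needing care is a lemma about the categorical product. If $\alpha\sim_c\alpha'\colon\Omega\to L$ and $\beta\sim_c\beta'\colon\Omega\to L$, then $(\alpha,\beta)\sim_c(\alpha',\beta')\colon\Omega\to L\Pi L$. I would verify this from the simplices of $L\Pi L$: a set of vertices $(x_i,y_i)$ spans a simplex exactly when $\{x_i\}$ and $\{y_i\}$ are simplices of $L$. For any simplex $\tau$ of $\Omega$, the set $(\alpha,\beta)(\tau)\cup(\alpha',\beta')(\tau)$ projects to $\alpha(\tau)\cup\alpha'(\tau)$ and $\beta(\tau)\cup\beta'(\tau)$, and both are simplices of $L$.

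Here one coordinate is constant along the chain, so the lemma is used only with $\alpha=\alpha'$. The same lemma was already used implicitly in Proposition~\ref{prop:multiplicative product inequality for DC}. Combining Steps 1 and 2 gives $\dcscott(f)=\dcMoriloWu(f)$.
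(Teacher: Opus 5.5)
Your proof is correct and follows the paper's argument: one direction composes the Murillo--Wu section with $f$ via $\Delta_L\circ f=f^2\circ\Delta_K$, and the other uses the projection criterion of Theorem~\ref{Farber subcomplex and projection maps} with $s=\pi_1|_{\Omega}$. Your explicit check that coordinatewise contiguity gives contiguity of maps into $L\Pi L$ supplies a step the paper uses without stating it.
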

\begin{proof}
    We first prove that $\dcscott(f) \leq \dcMoriloWu(f).$ Assume that $\dcMoriloWu(f)=k$ and take $\Omega:=\Omega_i$, where $\Omega_i$ are as in \Cref{defn: Murillo-Wu}. Then we have $f^2 \circ \Delta_K \circ s_i\sim f^2|_{\Omega}.$ Let $s_{\Omega}=f \circ s_i,$ where $s_i$ is as in the diagram below. But then $\Delta_L \circ s_{\Omega}= \Delta_L \circ f \circ s_i = f^2 \circ \Delta_k \circ s_i \sim f^2|_{\Omega}.$ Hence $\dcscott(f) \leq \dcMoriloWu(f).$

    Consider the following diagram:
    \[\begin{tikzcd}
   & K  \arrow[d, "\Delta_K" ]\arrow[r,"f"] & L \arrow[d, "\Delta_L"] \\%
 \Omega_i \arrow[r, hook, "i"] \arrow[ru, bend left, "\exists ~ s_i"] & K \Pi K \arrow[r, "f^2"] & L \Pi L.
\end{tikzcd}\]
For the other direction, suppose there exists an $f$-Farber subcomplex $\Omega$ as in the definition of $\dcscott.$ \Cref{Farber subcomplex and projection maps} then implies that $\pi^L_1 \circ f^2|_{\Omega} \sim \pi^L_2 \circ f^2|_{\Omega}.$ Thus 
    \begin{align*}
        f^2|_{\Omega}  =& (\pi^L_1 \circ f^2|_{\Omega} ,\pi^L_2 \circ f^2|_{\Omega})\\
        \sim & (\pi^L_1 \circ f^2|_{\Omega} ,\pi^L_1 \circ f^2|_{\Omega}) \\
        =&(f \circ \pi^K_1|_{\Omega},f\circ \pi^K_1|_{\Omega})\\
        =& (f,f) \circ \pi^K_1|_{\Omega}\\
        =& \Delta_L \circ f \circ \pi^K_1|_{\Omega}\\
        =& f^2\circ \Delta_K \circ \pi^K_1|_{\Omega}. ~~(\text{Using commutativity of the diagram.})
         \end{align*}
    Hence if we take $s_i:= \pi^K_1|_{\Omega}$, then it satisfies \Cref{defn: Murillo-Wu} and consequently we obtain $\dcscott(f) \geq \dcMoriloWu(f).$ Combining both, we have $\dcscott(f) = \dcMoriloWu(f).$
\end{proof}

\begin{remark}
In view of the Proposition \ref{prop: Sc=MW}, which identifies $\dcscott(f) $ and $\dcMoriloWu(f)$, any discrete analog of the results of Murillo and Wu would yield no additional information in this context. Consequently, we do not pursue a separate discrete development of their theory here, as it is already subsumed by the established equivalence.    
\end{remark}

\section{Acknowledgments}
Sutirtha Datta acknowledges UGC Junior Research Fellowship. Navnath Daundkar gratefully acknowledges the support of the DST–INSPIRE Faculty Fellowship (Faculty Registration No.~IFA24-MA218), Department of Science and Technology, Government of India; the Industrial Consultancy and Sponsored Research (IC\&SR), Indian Institute of Technology Madras, through the New Faculty Initiation Grant (RF25261395MANFIG009294).

\bibliographystyle{plain} 
\bibliography{references}

\end{document}